\theoremstyle{plain}
\newtheorem{theorem}{Theorem}[section]
\newtheorem{lemma}[theorem]{Lemma}
\newtheorem{corollary}[theorem]{Corollary}
\newtheorem{proposition}[theorem]{Proposition}
\theoremstyle{definition}
\newtheorem{definition}[theorem]{Definition}
\newtheorem{remark}[theorem]{Remark}
\newtheorem{example}[theorem]{Example}
\newtheorem{application}[theorem]{Application}
\begin{document}

\title[Classification of constant angle hyp. in warped products]{Classification of constant angle 
hypersurfaces in warped products via eikonal functions}

\author{Eugenio Garnica}       
\author{Oscar Palmas}
\author{Gabriel Ruiz-Hern\'andez}

\subjclass{Primary 53B25.}
\keywords{Constant angle hypersurfaces, eikonal equation, transnormal function.}

\date{} 

\maketitle

\begin{abstract}
Given a warped product of the real line with a Riemannian manifold of arbitrary dimension, we classify the hypersurfaces whose tangent spaces make
a constant angle with the vector field tangent to the real direction. We show that this is a natural setting in which to extend previous results in this direction made by several authors. Moreover, when the constant angle hypersurface is a graph over the Riemannian manifold, we show that the function involved satisfies a generalized eikonal equation, which we solve via a geometric method. 
In the final part of this paper we prove that minimal constant angle hypersurfaces are cylinders over minimal submanifolds.
\end{abstract}

\section*{Introduction}

Several classical, well-known geometric objects are defined in terms of making a constant angle with a given, distinguished direction. Firstly, classical helices are curves making a constant angle with a fixed direction. A second example is the logarithmic spiral, the {\em spira mirabilis} studied by Jacob Bernoulli, which makes a constant angle with the radial direction. In a third famous example which had applications to navigation, the {\em loxodromes} or {\em rhumb lines} are those curves in the sphere making a constant angle with the sphere meridians.

Recently, several authors had established and investigated some generalizations of the above situation. In 2007, F. Dillen et al. characterized those surfaces $M$ in $\mathbb S^2\times\mathbb R$ whose normal vector $\xi$ makes a constant angle $\theta$ with the direction tangent to $\mathbb R$ (see \cite{MR2346426}). 
Two years later, F. Dillen and M.~I. Munteanu gave in \cite{MR2496114} a similar characterization theorem for constant angle surfaces in the product $\mathbb H^2\times\mathbb R$, using the hyperboloid model for the hyperbolic plane $\mathbb H^2$. In the final part of the paper they classified the constant angle surfaces with constant mean curvature in this Riemannian product.

Another nice paper in this direction is \cite{MR2681099}, where M.~I.
Munteanu made a review of some applications of constant angle surfaces and gave a complete classification of the so-called constant slope surfaces in $\mathbb{R}^3$, that is, those surfaces making a constant
angle with the radial position vector field. He showed that a surface $S\subset\mathbb R^3$ is a constant slope surface iff either it is an Euclidean $2$-sphere centered at the origin or it can be parameterized by
\[
r(u,v)=u\sin\theta(\cos\xi f(v)+\sin \xi f(v)\times f'(v)),
\]
where $\theta$ is a constant different from $0$, $\xi=\xi(u)=\cot\theta\log u$ and $f$ is a unit speed curve on the Euclidean sphere $\mathbb S^2$.

It is also worth mentioning the recent paper \cite{DMVV}, where Dillen,         
Munteanu, Van der Veken and Vrancken classified the constant angle surfaces in the 
warped product $I\times_\rho\mathbb R^2$. We will discuss the relation of this and other works with ours in Section \ref{sec:examples}.
This class of surfaces or curves making a constant angle with respect to some direction have been also investigated
in Minkowski space, see \cite{MR2778006} and \cite{LoMu} for details.

Using another approach, A. Di~Scala and the third named
author studied in \cite{MR2520724} the helix submanifolds of Euclidean spaces, i.~e., submanifolds making
a constant angle with a constant direction. They builded constant angle hypersurfaces of $\mathbb R^{n+1}$, as follows: Given an orientable hypersurface $L$ of $\mathbb R^n$ with a unit normal vector field $\eta$, let $r:L\times\mathbb R\to\mathbb R^{n+1}$ be defined by
\[
r(x,s)=x+s\left((\sin\theta)\eta(x)+(\cos\theta)d\right),
\]
where $\theta$ is constant and $d=(0,\dots,0,1)$. Then $f$ parameterize a hypersurface making a constant angle $\theta$ with the fixed direction determined by $d$. Moreover, they showed that, except for some trivial cases, any helix hypersurface admits locally such a parametrization. They also showed that these non-trivial constant angle submanifolds are given locally as graphs of functions whose gradient has constant length (that is, solutions of the so-called eikonal equation). In \cite{MR2681534}, they showed further that any function satisfying the eikonal equation may be characterized as a distance function relative to an embedded hypersurface in the ambient space.

All of the above results suggest the existence of a general framework in which it is natural to consider the study of constant angle submanifolds. As it will turn out along this paper, a natural choice for that purpose is an ambient space $\bar M$ given as a warped product of the form $I \times_\rho \mathbb{P}^n$, where $I$ is an open interval and $\rho:I\to\mathbb R^+$ is a smooth positive function. We consider those submanifolds making a constant angle with the vector field $\partial_t$ tangent to the $\mathbb R$-direction. Of course, the case of the Euclidean ambient
space is obtained by considering $\mathbb P^n=\mathbb R^n$ and the
constant warping function $\rho\equiv 1$.

The plan of this paper is the following. Section \ref{sec:preliminares} gives the basic geometric properties of constant angle hypersurfaces in a warped product, showing that they have a rich extrinsic and intrinsic geometry. In Theorem \ref{teo:extrinsic-geometryofCAH} we prove that if the projection of $\partial_t$ to the tangent space of a constant angle hypersurface does not vanish, it determines a principal direction on the hypersurface. In the terminology of the recent works \cite{MR2506241}, \cite{Di-Mu-Ni} and \cite{Mu-Ni-II}, the hypersurface has a \emph{canonical principal direction} relative to the distinguished vector field $\partial_t$. Also, we prove that the integral lines of this tangential component are lines of curvature and geodesics of the hypersurface.

In Section \ref{sec:main} we state our main result giving a complete characterization of constant angle hypersurfaces in $I \times_\rho
\mathbb{P}^n$ (see Theorem \ref{graph-of-transnormal}):

\medskip

\textit{Let} $\overline{M}^{n+1}$ \textit{be the warped product} $I\times_\rho\mathbb
P^n$. \textit{A connected hypersurface} $M$ \textit{of} $\overline{M}$ \textit{is a constant angle hypersurface in }$\bar M$ \textit{if and only if it is an open subset of  either }
\begin{itemize}
\item \textit{A cylinder of the form} $I\times L^{n-1}$, \textit{where} $L$ \textit{is a hypersurface of} $\mathbb P$; \textit{or}

\item \textit{The graph of a function} $f:\mathbb P\to\mathbb R$ \textit{satisfying the} generalized eikonal equation \begin{equation}\label{eq:eikonal}
\vert\nabla f\vert=C\cdot(\rho\circ f),
\end{equation}
\textit{where }$C$ \textit{is a constant, }$\rho$ \textit{stands for the warping function and the graph of} $f$ \textit{is defined as the set
of points }$(f(p),p)$ \textit{with} $p\in \mathbb P$.
\end{itemize}

\bigskip

We also give a geometric method to build the solutions of the generalized eikonal equation, by generalizing the technique given in \cite{MR2681534} for the case of the classical eikonal equation. Our result in the context of constant angle hypersurfaces is the following (see Corollary \ref{cor:construccion}):

\medskip

\textit{Let} $\overline{M}^{n+1}$ \textit{be the warped product}
$I\times_\rho\mathbb P^n$. \textit{A connected hypersurface in} $\bar M$ \textit{is a constant angle hypersurface with} $\theta\in(0,\pi/2)$ \textit{if and only if it is the graph of a function} $f:\mathbb P\to\mathbb R$ \textit{of the form} $f=h\circ d$, \textit{where} $d$ \textit{measures the distance to a fixed orientable hypersurface} $L\subset\mathbb P$ \textit{and} $h$ \textit{satisfies}
\[
h^{-1}(s)=\int_{s_0}^s\frac{d\sigma}{C\rho(\sigma)},
\]
\textit{with} $C=\tan\theta$.

\medskip


In Section \ref{sec:examples} we show the relation between the parametrizations of constant angle surfaces obtained by the authors already mentioned in this Introduction and our language. Note that our setting includes all codimension~$1$ cases, and in particular, the case of surfaces in every $3$-dimensional warped product of the form $I\times_\rho\mathbb P^2$.

Finally, in Section \ref{sec:minimas} we prove that minimal constant angle hypersurfaces are cylinders over a minimal submanifold
of codimension two. We deduce this result from the following nice property:\\ 
\textit{Let $f: \Omega \subset \mathbb{R}^n \longrightarrow \mathbb{R}$ 
be a smooth function with connected open domain $\Omega$. If $f$ is harmonic and eikonal then $f$ is linear in $\Omega$}.

\section{A canonical principal direction}\label{sec:preliminares}

Throughout this paper, we will use the following notations:
\begin{itemize}
\item $\overline{M}^{n+1}$ will denote a warped product of the form $I \times_\rho \mathbb{P}^n$, where $I$ is an open interval, $\mathbb P$ is a Riemannian manifold and $\rho:I\to\mathbb R^+$.

\item $\overline{\nabla}$ is the Riemannian connection on $\overline{M}$ relative to the warped product metric.

\item $\partial_t$ will denote the unit vector field tangent to the $\mathbb R$-direction in $\overline{M}$.

\item $M$ will be a connected orientable hypersurface in $\overline{M}$.

\item $\nabla$ will denote the induced Riemannian connection on $M$.

\item $\xi\in\mathfrak{X}(M)$ will be a unit vector field, everywhere normal to $M$.

\item $\theta$ will denote the function on $M$ measuring the angle between $\partial_t$ and $\xi$.
\end{itemize}

\begin{definition}
We say that $M$ is a {\em constant angle hypersurface} iff the angle function $\theta$ is constant along $M$.
\end{definition}

\begin{remark} Given a constant angle hypersurface, we may choose the orientation of $M$ so that $\theta\in[0,\pi/2]$, as we will do.
\end{remark}

Our aim here is to classify all constant angle hypersurfaces $M$ of the warped product $I\times_\rho\mathbb P^n$. A trivial case occurs when $\theta\equiv 0$. In the language of the warped product structure, $\xi=\partial_t$ and then a connected constant angle hypersurface is contained in a slice $\{t_0\}\times\mathbb P$. So, we suppose in this section that $\theta\in(0,\pi/2]$.

\medskip

Let us fix some additional notation. As usual, we have the Gauss and Weingarten equations for hypersurfaces:
\[
\overline{\nabla}_YZ=\nabla_YZ+II(Y,Z),\quad
\overline{\nabla}_Y\xi=-A_\xi Y,
\]
where $Y,Z\in\mathfrak{X}(M)$, $II$ is the second fundamental form
of $M$ and $A_\xi$ is the shape operator associated to $\xi$. Recall also that
$II$ and $A_\xi$ are related by the formula
\[
\langle II(Y,Z),\xi \rangle = \langle A_\xi Y, Z \rangle.
\]

Let $\partial_t^\top$ be the component of $\partial_t$ tangent to $M$, that is,
\[
\partial_t^\top=\partial_t-\langle \partial_t,\xi\rangle\xi, \]

Note that $\theta\in(0,\pi/2]$ implies $\partial_t^\top\neq 0$ and we may
define
\begin{equation}\label{def:T}
T=\frac{\partial_t^\top}{\vert \partial_t^\top \vert}.
\end{equation}

Hence we may write
\begin{equation}\label{eq:descomposicion}
\partial_t=(\sin\theta)T+(\cos\theta)\xi.
\end{equation}

Now we are ready to give some basic geometric properties of the constant angle hypersurfaces.

\begin{theorem}
\label{teo:extrinsic-geometryofCAH} Let $M$ be a constant angle
hypersurface of $\overline{M}^{n+1}$ such that $\theta\in(0,\pi/2]$. Then the integral lines of
the vector field $T$ defined in $(\ref{def:T})$ are
lines of curvature of $M;$ in fact,
\[ A_\xi T= - \cos\theta\frac{\rho'}{\rho}T. \]

In other words, $T$ is a principal direction of $M$. Moreover, these lines are geodesics of $M$, that is, $ \nabla_T T =0$.

Additionally, the integral lines of $T$ are geodesics of
$\overline{M}$ iff either $\partial_t$ is parallel or $\theta=\pi/2$.
\end{theorem}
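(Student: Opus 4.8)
The plan is to work with the warped product structure explicitly and compute covariant derivatives using the standard formulas for a warped product $\overline{M} = I \times_\rho \mathbb{P}$. Recall that for such a metric, if $\partial_t$ is the unit field along $I$, then $\overline{\nabla}_{\partial_t}\partial_t = 0$, $\overline{\nabla}_{\partial_t} X = \overline{\nabla}_X \partial_t = \frac{\rho'}{\rho} X$ for $X$ tangent to the fibers $\mathbb{P}$, and $\overline{\nabla}_X Y$ has fiber part the intrinsic connection of $\mathbb{P}$ together with a normal term involving $\frac{\rho'}{\rho}\langle X,Y\rangle \partial_t$. The key consequence I will extract first is a formula for $\overline{\nabla}_Y \partial_t$ for an \emph{arbitrary} $Y \in \mathfrak{X}(M)$: writing $Y = \langle Y,\partial_t\rangle\partial_t + Y^{\mathbb{P}}$ where $Y^{\mathbb{P}}$ is the fiber component, one gets $\overline{\nabla}_Y \partial_t = \frac{\rho'}{\rho}\,Y^{\mathbb{P}} = \frac{\rho'}{\rho}\left(Y - \langle Y,\partial_t\rangle \partial_t\right)$. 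This is the single computational fact everything rests on, and deriving it cleanly is the one place where care is needed regarding the value of $\rho'/\rho$ (it is a function on $I$, pulled back to $\overline{M}$, hence to $M$).

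Next I would differentiate the decomposition $\partial_t = (\sin\theta)T + (\cos\theta)\xi$ from \eqref{eq:descomposicion}. Since $\theta$ is constant, for any $Y \in \mathfrak{X}(M)$ we have $\overline{\nabla}_Y \partial_t = (\sin\theta)\overline{\nabla}_Y T + (\cos\theta)\overline{\nabla}_Y \xi = (\sin\theta)\overline{\nabla}_Y T - (\cos\theta) A_\xi Y$. Using the Gauss formula $\overline{\nabla}_Y T = \nabla_Y T + II(Y,T)$ and comparing tangential and normal parts with the warped-product formula above — noting that the fiber component of $\partial_t$ is exactly $(\sin\theta)T$ since $\partial_t^\top = (\sin\theta)T$ and $\partial_t$ has no fiber component beyond that — I obtain, on the tangential side, $(\sin\theta)\nabla_Y T = \frac{\rho'}{\rho}\left(Y - \langle Y,\partial_t\rangle\partial_t\right)^\top - \text{(correction)}$, and on the normal side a relation forcing $\langle A_\xi Y, T\rangle$. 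Specializing $Y = T$ and using $\langle T,\partial_t\rangle = \sin\theta$, the projection of $Y - \langle Y,\partial_t\rangle\partial_t$ onto $M$ works out so that the $\nabla_T T$ term vanishes and the shape-operator term yields $A_\xi T = -\cos\theta\,\frac{\rho'}{\rho}\,T$; this simultaneously gives that $T$ is a principal direction, that its integral lines are lines of curvature, and (from $\nabla_T T = 0$) that they are geodesics of $M$.

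Finally, for the last sentence: the integral lines of $T$ are geodesics of $\overline{M}$ iff $\overline{\nabla}_T T = 0$. By Gauss, $\overline{\nabla}_T T = \nabla_T T + II(T,T) = II(T,T) = \langle A_\xi T, T\rangle \xi = -\cos\theta\,\frac{\rho'}{\rho}\,\xi$. This vanishes identically along $M$ iff $\cos\theta \cdot \frac{\rho'}{\rho} \equiv 0$ on $M$, i.e. iff $\theta = \pi/2$ (so $\cos\theta = 0$) or $\rho' = 0$ along the relevant portion of $I$; and $\rho' \equiv 0$ is precisely the condition that $\partial_t$ be parallel in $\overline{M}$ (since $\overline{\nabla}_X \partial_t = \frac{\rho'}{\rho}X^{\mathbb{P}}$ vanishes for all $X$ iff $\rho' = 0$). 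I should be slightly careful to phrase "$\partial_t$ is parallel" as a statement that can be read off the relevant slab of $I$ traversed by $M$, matching the paper's phrasing.

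The main obstacle I anticipate is bookkeeping the normal-versus-tangential splitting correctly when I feed $Y - \langle Y,\partial_t\rangle\partial_t$ into the Gauss formula: the vector $\partial_t$ itself is neither tangent nor normal to $M$, so one must re-expand it via \eqref{eq:descomposicion} before separating components, and it is easy to drop a $\sin\theta$ or $\cos\theta$ factor. Getting the coefficient $-\cos\theta\,\rho'/\rho$ exactly right — rather than, say, $-\cot\theta\,\rho'/\rho$ or with a spurious $\sin\theta$ — is the delicate point; everything else is a direct application of the standard warped-product connection formulas together with Gauss and Weingarten.
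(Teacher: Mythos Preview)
Your approach is the paper's: differentiate the decomposition $\partial_t=(\sin\theta)T+(\cos\theta)\xi$, compute $\overline{\nabla}_Y\partial_t$ from the warped-product connection, and separate tangential and normal parts. Your single formula $\overline{\nabla}_Y\partial_t=\frac{\rho'}{\rho}\bigl(Y-\langle Y,\partial_t\rangle\partial_t\bigr)$ for arbitrary $Y$ is a mild streamlining of what the paper does by treating $W\perp T$ and $W=T$ in two passes; the last paragraph on $\overline{\nabla}_TT=II(T,T)=-\cos\theta\,(\rho'/\rho)\,\xi$ matches the paper exactly.

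Two points to tighten. First, specializing $Y=T$ alone does \emph{not} yield both conclusions: the tangential equation then reads $(\sin\theta)\nabla_TT-(\cos\theta)A_\xi T=\cos^2\theta\,\frac{\rho'}{\rho}\,T$, and since $\nabla_TT\perp T$ this only gives $\nabla_TT=\cot\theta\,(A_\xi T)^{\perp T}$. You need the normal-side identity for \emph{general} $Y$ (which you do mention in passing), namely $\langle A_\xi T,Y\rangle=-\cos\theta\,\frac{\rho'}{\rho}\langle T,Y\rangle$, to first conclude $A_\xi T\parallel T$; only then does $\nabla_TT=0$ follow. The paper secures this by first taking $W\perp T$. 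Second, the phrase ``the fiber component of $\partial_t$ is $(\sin\theta)T$'' is a slip: $\partial_t$ has \emph{zero} $\mathbb{P}$-component in the warped-product splitting; $(\sin\theta)T$ is its component tangent to $M$, which is a different decomposition. This does not affect your computation, but keep the two splittings straight when you write it up.
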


\begin{proof}

Suppose first that $\theta\in(0,\pi/2)$, which implies $\cos\theta\ne 0$. Differentiating (\ref{eq:descomposicion}) with respect to a vector field $W\in\mathfrak{X}(M)$, we obtain
\begin{equation}\label{eq:dwt}
\overline{\nabla}_W \partial_t= (\sin\theta)\overline{\nabla}_WT+(\cos\theta) \overline{\nabla}_W\xi.\end{equation}

Suppose additionally that $ \langle W,T \rangle =0$ or, equivalently, $ \langle W,\partial_t \rangle =0$. To calculate $\overline\nabla_W\partial_t$, we may suppose that $W$ is given as a lifting of a vector field on $\mathbb P$ and use standard derivation formulas in warped products (see \cite{MR719023}, p. 296, Prop. 35, for example) to obtain that $\overline{\nabla}_W \partial_t= (\rho'/\rho)W$. Taking the components tangent and normal to $M$ in the above formula and using that $\cos\theta\ne 0$, we have
\[
A_\xi W=-\frac{\rho'}{\rho \cos\theta} W + (\tan\theta) \nabla_W T,
\]
and $II(W,T)= 0$, which implies that
\[
\langle A_\xi T, W \rangle = \langle A_\xi W,T \rangle = \langle II(W,T) , \xi \rangle=0
\]
for every $W\in\mathfrak{X}(M)$ such that $ \langle W,T \rangle =0$. In turn, this fact implies that $A_\xi T$ is a scalar multiple of $T$, i.e., $T$ is a principal
direction of $M$.

%
%

We return to the general expression (\ref{eq:dwt}) and take $W=T$. In order to use the derivation formulas for warped products again, we write
\[
T=(\sin\theta)\partial_t+(\cos\theta)\left[(\cos\theta)T-(\sin\theta)\xi\right],
\]
and note that the vector field $(\cos\theta)T-(\sin\theta)\xi$ is orthogonal to $\partial_t$. Hence,
\begin{eqnarray*}
\overline\nabla_T\partial_t & = & (\sin\theta)\overline\nabla_{\partial_t}\partial_t
+(\cos\theta)\overline\nabla_{\left[(\cos\theta)T-(\sin\theta)\xi\right]}\partial_t \\
 & = & (\cos\theta)\frac{\rho'}{\rho}\left[(\cos\theta)T-(\sin\theta)\xi\right];
\end{eqnarray*}
so that the tangent and normal components of (\ref{eq:dwt}) are
\[
\cos^2\theta\frac{\rho'}{\rho}T=(\sin\theta)\nabla_TT-(\cos\theta)A_\xi T
\]
and
\[
-\sin\theta\cos\theta\frac{\rho'}{\rho}\xi=(\sin\theta)II(T,T).
\]

From the first of these expressions, since $A_\xi T$ is a scalar multiple of $T$ (and $\sin\theta\ne 0$), we deduce that the same happens with $\nabla_TT$; but as $T$ is a unit vector field, we have $\nabla_T T=0$; i.e., the integral lines of $T$ are geodesics in $M$. Also,
\[A_\xi T = - \cos\theta\frac{\rho'}{\rho} T,\]
meaning that $T$ is a principal direction. In the case of the second fundamental form, we have
\[II(T,T)= - \cos\theta\frac{ \rho' }{\rho}  \xi.\]

Since we are analyzing the case $\cos\theta\ne 0$,  $II(T,T) = 0$ if and only if $\rho'=0$; i.e., $\rho$ is constant. In this case, $\overline\nabla_W\partial_t=0$ for every vector field $W\in\mathfrak{X}(\overline{M})$. That is, the integral lines of $T$ are geodesics of $\overline{M}$ if and only if $\partial_t$ is parallel.

The analysis in the case $\theta=\pi/2$ is similar, but easier, since in this case equation (\ref{eq:descomposicion}) reduces to $T=\partial_t$. We have that $\overline\nabla_TT=\overline\nabla_{\partial_t}\partial_t=0$ and then the integral lines of $T$ are geodesics of $\bar M$, thus they also are geodesics of $M$. If $W\in\mathfrak{X}(M)$ is orthogonal to $T$ we have on one hand
\[
\overline\nabla_WT=\overline\nabla_W \partial_t=\frac{\rho'}{\rho}W,
\]
and on the other hand, $\overline\nabla_WT=\nabla_WT+II(W,T)$, which implies that $II(W,T)=0$. As in the previous case, this in turn implies that $A_\xi T$ is a scalar multiple of $T$ and $T$ is a principal direction. In fact, since $\overline\nabla_TT=\nabla_TT+II(T,T)=0$, we have
\[
\langle A_\xi T,T\rangle =\langle II(T,T),\xi\rangle =0,
\]
and then $A_\xi T=0$ and $T$ is a principal direction.
\end{proof}

Theorem \ref{teo:extrinsic-geometryofCAH} says that the constant
angle hypersurfaces with $\theta\in(0,\pi/2]$ are examples of hypersurfaces with a canonical principal direction, which means that there exists a vector field in the ambient such that the
component of this vector field tangent to the surface is a principal direction for the shape operator of the surface. This notion has been studied recently by several
authors; see, for example \cite{MR2506241}, \cite{Di-Mu-Ni} and \cite{Mu-Ni-II}, where the authors classify surfaces with a canonical principal direction in $\mathbb H^2\times\mathbb R$, $\mathbb S^2\times\mathbb R$ and $\mathbb R^2\times\mathbb R$, respectively.

\section{Construction and characterization of constant angle hypersurfaces}\label{sec:main}

In this section we prove our main results, classifying the constant angle hypersurfaces in any warped product of the form $I\times_\rho\mathbb P^n$. First we consider the case of $\theta=\pi/2$:

\begin{proposition}\label{prop:pisobre2}
Let $M$ be a connected hypersurface of $I\times_\rho\mathbb P^n$. $M$ is a constant angle hypersurface with $\theta=\pi/2$ if and only if $M$ is an open subset of a cylinder $I\times L^{n-1}$, where $L$ is a $(n-1)$-dimensional hypersurface of $\mathbb P$.
\end{proposition}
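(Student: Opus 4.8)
The plan is to prove both implications by analyzing what $\theta = \pi/2$ means in terms of the warped product structure. In this case equation \eqref{eq:descomposicion} reduces to $\partial_t = T$, so $\partial_t$ is everywhere tangent to $M$, and the normal $\xi$ is everywhere orthogonal to $\partial_t$; equivalently, $\xi$ is tangent to the fibers $\{t\}\times\mathbb P$ at each point. For the easy direction, I would start from a cylinder $M = I \times L^{n-1}$ where $L \subset \mathbb P$ is a hypersurface: a unit normal to $M$ in $\overline M$ can be taken to be the (suitably rescaled, because of the warping) lift of a unit normal $\eta$ to $L$ in $\mathbb P$, and since such a lift is orthogonal to $\partial_t$ in the warped metric, we get $\langle \partial_t,\xi\rangle = 0$, i.e.\ $\theta \equiv \pi/2$; connectedness of $M$ is preserved by passing to an open subset, so any open subset of such a cylinder works.

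For the converse, suppose $M$ is connected with $\theta \equiv \pi/2$. First I would invoke Theorem \ref{teo:extrinsic-geometryofCAH}: the integral curves of $T = \partial_t$ are geodesics of both $M$ and $\overline M$ (in the $\theta = \pi/2$ case the theorem gives $\overline\nabla_T T = 0$). The integral curves of $\partial_t$ in $\overline M = I\times_\rho\mathbb P$ are exactly the curves $t \mapsto (t, p)$ for fixed $p \in \mathbb P$, so $M$ is foliated by such vertical lines: through every point $(t_0,p_0)\in M$, the whole vertical line $I \times \{p_0\}$ (or at least the portion of it lying in the domain) is contained in $M$. This already says $M$ is "cylindrical" over its projection to $\mathbb P$. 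Next I would define $L := \pi_{\mathbb P}(M) \subset \mathbb P$, the image of $M$ under the projection to the fiber, and show $L$ is an $(n-1)$-dimensional embedded submanifold with $M$ locally equal to $I \times L$.

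The key step — and the main obstacle — is showing $L$ is a smooth hypersurface and that $M = I \times L$ locally as warped products. Here I would argue as follows: since $\partial_t$ is tangent to $M$, the tangent space $T_{(t,p)}M$ splits as $\mathbb R\partial_t \oplus V$, where $V$ is an $(n-1)$-dimensional subspace of the fiber tangent space $T_p(\{t\}\times\mathbb P)$; because $\xi$ is orthogonal to $\partial_t$, it lies in the fiber direction and is orthogonal to $V$, so $V$ does not depend on $t$ (the vertical translations $(t,p)\mapsto(t',p)$ are isometries of the fiber factor up to the conformal warping factor, which does not change orthogonality, hence carry $T_pM\cap T_p\mathbb P$ to $T_{p}M\cap T_{p}\mathbb P$ consistently along the leaf). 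Thus the distribution $p \mapsto V_p$ on $L$ is well-defined and $(n-1)$-dimensional, $\pi_{\mathbb P}|_M$ is a submersion onto $L$ with vertical lines as fibers, and $L$ inherits a smooth hypersurface structure from the local submersion. Finally, a local parametrization of $M$ built from $I$ times a local parametrization of $L$ realizes $M$, locally, as the cylinder $I\times L$; connectedness lets us conclude $M$ is an open subset of such a cylinder. I expect the delicate point to be checking that $V_p$ is genuinely independent of the height $t$ and defines a smooth distribution — this is where one must use that $\langle\partial_t,\xi\rangle\equiv 0$ together with the warped product form of the metric, rather than some weaker consequence.
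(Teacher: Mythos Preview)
Your proposal is correct and rests on the same core observation as the paper---that $\theta=\pi/2$ forces $\partial_t$ to be tangent to $M$, so $M$ is swept out by vertical lines---but your execution is considerably more elaborate than what the paper actually does. The paper's proof is three lines: intersect $M$ with a single slice $\{t_0\}\times\mathbb P$, note that by transversality (since $\partial_t\in TM$) this intersection is a smooth hypersurface $L\subset\mathbb P$, and then flow along $\partial_t$ to recover $M$ as an open piece of $I\times L$; the converse is declared ``clear.'' By contrast, you obtain $L$ as the \emph{projection} $\pi_{\mathbb P}(M)$ and then have to argue that the fibrewise tangent space $V_p$ is independent of height---precisely the ``delicate point'' you flag. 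The slice-intersection route sidesteps this entirely: transversality hands you the smooth structure on $L$ for free, with no need to check that a distribution is well-defined. Also, your appeal to Theorem~\ref{teo:extrinsic-geometryofCAH} is unnecessary: the geodesic property of the integral curves of $T$ is irrelevant here, since all you use is that $\partial_t$ is tangent to $M$ and that its integral curves in $\overline M$ are the vertical lines $t\mapsto(t,p)$, both of which are immediate from $\theta=\pi/2$ and the product structure.
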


\begin{proof}
Suppose $M$ is a constant angle hypersurface with $\theta=\pi/2$. By transversality, the intersection of $M$ with a fixed slice $\{t_0\}\times\mathbb P^n$ is (isometric to) a hypersurface $L$ of $\mathbb P^n$. Since $\partial_t$ is everywhere tangent to $M$ in this case, we reconstruct $M$ by departing from this intersection and following the flow of $\partial_t$, obtaining the aforementioned cylinder. The converse is clear.
\end{proof}

In view of this result, we may suppose from now on that $\theta\in[0,\pi/2)$. Using transversality, we may suppose additionally that $M$ is given locally as a graph of a real function $f:\mathbb P\to I$. We will prove that such a graph is a constant angle hypersurface if and only if $f$ satisfies a condition on the norm of its gradient (see equation (\ref{eq:eikonal})). In the following definition we fix the classical terminology for this kind of functions.

\begin{definition} \label{defeikonal}
Let $\mathbb P^n$ be a Riemannian manifold and $f:\mathbb P
\rightarrow I$ a differentiable function, where $I$ is a real interval. We say that $f$
is {\em eikonal} if it is a solution of the {\em eikonal equation}
\[ | \nabla f |=C,\] where $\nabla f$ denotes the gradient of $f$
and $C$ is a given constant. More generally, let $\rho:I\to\mathbb R^+$ be a differentiable positive function. We say
that $f$ is a {\em transnormal function} if it satisfies the
generalized eikonal equation (\ref{eq:eikonal}), namely,
\[
\vert\nabla f\vert=C\cdot(\rho\circ f).
\]\end{definition}

The concept of transnormal function is related to the class of
submanifolds called isoparametric submanifolds which are level
hypersurfaces of isoparametric functions. According to
\cite{MR0901710}, a transnormal function
 is a smooth function $f$
satisfiying the equation $|\nabla f|^2=b \circ f$, where $b$ is a
smooth function which can be zero at some points. In our case
$b=C\rho>0$. An isoparametric function is a transnormal function
that also satisfies the condition $\Delta f = a \circ f$, where
$a$ is a smooth function. It is well known that Cartan investigated such functions on
space forms; see \cite{MR1990032} and \cite{MR0901710} for more
details. An interesting result in \cite{MR0901710}, is that a
transnormal function in $\mathbb{S}^n$ or in $\mathbb{R}^n$ is
isoparametric.

\medskip

The next theorem is our main result, giving the precise relation between the transnormal functions and the constant angle
hypersurfaces.

\begin{theorem}
\label{graph-of-transnormal}
Let $\overline{M}^{n+1}$ be the warped product $I\times_\rho\mathbb
P^n$. A connected hypersurface $M$ of $\overline{M}$ is a constant angle hypersurface in $\bar M$ if and only if it is an open subset of  either
\begin{itemize}
\item A cylinder of the form $I\times L^{n-1}$, where $L$ is a hypersurface of $\mathbb P$; or

\item The graph of a transnormal function $f:\mathbb P\to I$ satisfying equation (\ref{eq:eikonal}) for the warping function $\rho$. Here the graph of $f$ is defined as the set
of points $(f(p),p)$ with $p\in \mathbb P$.
\end{itemize}
\end{theorem}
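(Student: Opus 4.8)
The plan is to reduce to the range $\theta\in(0,\pi/2)$ and then to read the equation off an explicit formula for the unit normal of a graph. The case $\theta=\pi/2$ is exactly Proposition \ref{prop:pisobre2}, which yields the cylinder alternative $I\times L^{n-1}$; and the degenerate case $\theta\equiv 0$ was already settled at the start of Section \ref{sec:preliminares}, where $\xi=\partial_t$ and $M$ sits inside a slice $\{t_0\}\times\mathbb P$, hence is an open subset of the graph of the constant function $f\equiv t_0$, trivially a solution of (\ref{eq:eikonal}) with $C=0$. So I would assume henceforth that $\theta$ is a constant in $(0,\pi/2)$. Then $\langle\partial_t,\xi\rangle=\cos\theta\neq 0$, so $\partial_t$ is transverse to $M$ everywhere, and the implicit function theorem shows that every point of $M$ has a neighbourhood which is the graph $\{(f(p),p):p\in U\}$ of a smooth $f\colon U\to I$ over an open set $U\subseteq\mathbb P$.

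The heart of the argument is to express $\theta$ through $f$. Writing the warped metric as $dt^2+\rho(t)^2g_{\mathbb P}$, the tangent space of such a graph at $(f(p),p)$ is spanned by the vectors $df_p(v)\,\partial_t+v$, $v\in T_p\mathbb P$; a short computation then shows that $\partial_t-\rho(f)^{-2}\nabla f$ is orthogonal to all of them, where $\nabla f$ is the (horizontally lifted) gradient of $f$ with respect to $g_{\mathbb P}$. Normalizing gives
\[
\xi=\frac{1}{\sqrt{1+|\nabla f|^{2}/\rho(f)^{2}}}\left(\partial_t-\frac{1}{\rho(f)^{2}}\nabla f\right),
\]
with $|\nabla f|$ measured in $g_{\mathbb P}$, and pairing with $\partial_t$ yields
\[
\cos\theta=\langle\partial_t,\xi\rangle=\frac{1}{\sqrt{1+|\nabla f|^{2}/\rho(f)^{2}}}.
\]
From this identity the statement follows at once: $\theta$ is constant on $U$ precisely when $|\nabla f|^{2}/\rho(f)^{2}$ is constant, i.e. when $|\nabla f|=C\cdot(\rho\circ f)$ for some constant $C\geq 0$, and comparing both sides of the displayed identity forces $1+C^{2}=\sec^{2}\theta$, that is $C=\tan\theta$. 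The converse is then automatic: if $f\colon\mathbb P\to I$ satisfies (\ref{eq:eikonal}), the same computation shows that its graph is transverse to $\partial_t$ and has the constant angle $\theta=\arctan C$ with it.

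It remains to promote this local description to a global one. Since $\partial_t$ is transverse to $M$, the projection $\pi\colon M\to\mathbb P$ is a local diffeomorphism, and on overlaps the locally defined graphing functions must agree (they describe the same subset of $\bar M$); using that $M$ is connected, I would patch them into a single function whose graph contains $M$ as an open subset. The step I expect to be the main obstacle is this combination of the warped-product normal computation with the gluing — in particular the bookkeeping of which metric the symbols $\nabla f$ and $|\nabla f|$ refer to, since the warping factor $\rho(f)$ enters exactly through the passage between $g_{\mathbb P}$ and the ambient metric. As an alternative to the explicit normal, one could instead invoke Theorem \ref{teo:extrinsic-geometryofCAH}, identify the canonical direction $T$ with the normalized gradient direction of $f$, and recover (\ref{eq:eikonal}) from the length $|\partial_t^\top|=\sin\theta$; but the direct computation above looks like the shortest route.
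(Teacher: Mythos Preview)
Your proposal is correct and follows essentially the same route as the paper: reduce to the graph case via Proposition~\ref{prop:pisobre2}, write down an explicit normal to the graph (the paper uses the scalar multiple $(\rho\circ f)^2\partial_t-\nabla f$ of your vector) and read off $\cos\theta=(\rho\circ f)/\sqrt{(\rho\circ f)^2+|\nabla f|^2}$, which is algebraically identical to your formula. The only addition in your write-up is the explicit local-to-global patching, which the paper leaves implicit.
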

\begin{proof} Let $M$ be a constant angle hypersurface in $\overline{M}$. By Proposition \ref{prop:pisobre2}, we may suppose that $\theta\in[0,\pi/2)$ and that $M$ is a graph of a function $f$.
Let us denote by $\nabla
f$ the lift to $\overline{M}$ of the gradient of $f$. Then it is
easy to see that a vector field $\xi$ everywhere normal to the
graph of $f$ may be chosen as
\[
\xi=(\rho\circ f)^2\partial_t-\nabla f.
\]

Using the definition of the warped product metric and the fact
that $\partial_t$ and $\nabla f$ are orthogonal, we have that the
square of the norm of $\xi$ is given by
\[
\langle\xi ,\xi\rangle=(\rho\circ f)^4+(\rho\circ
f)^2\,\vert\nabla f\vert^2=(\rho\circ f)^2((\rho\circ
f)^2+\vert\nabla f\vert^2),
\]
and consequently the angle $\theta$ between $\xi$ and $\partial_t$ satisfies

\[
\label{dfn:angle-of-graphs}
\cos\theta= \left\langle \frac{\xi}{\vert\xi\vert},
\partial_t\right\rangle  =\frac{\rho\circ f}{\sqrt{(\rho\circ f)^2+\vert\nabla f\vert^2}}.
\]
Note that $\cos\theta\ne 0$ for $\theta\in[0,\pi/2)$. Hence we may express $\vert\nabla f\vert$ in terms of $\rho\circ f$ as
$$ \vert\nabla f\vert = (\tan \theta)( \rho\circ f ),$$
which means that $f$ is transnormal with $C=\tan\theta$.

Conversely, if we consider the graph of a transnormal function satisfying equation (\ref{eq:eikonal}), the angle $\theta$ between its normal $\xi$ and $\partial_t$ is such that
\begin{equation}
\label{dfn:angle-of-CAgraphs}
\cos\theta=\left\langle \frac{\xi}{\vert\xi\vert},
\partial_t\right\rangle  = \frac{\rho\circ f}{\sqrt{(\rho\circ f)^2+\vert\nabla f\vert^2}}=\frac{1}{\sqrt{1+C^2}};
\end{equation}
meaning that the graph of $f$ is a constant angle
hypersurface.
\end{proof}

In short, Theorem \ref{graph-of-transnormal} proves that every constant angle hypersurface
is locally the graph of a function satisfying a partial differential equation on a Riemannian manifold $\mathbb{P}^n$, the generalized eikonal equation (\ref{eq:eikonal}). In the final part of this section we will solve this equation explicitly by a geometric method using the distance function to an arbitrary hypersurface in $\mathbb{P}^n$.

\medskip

As a first step, in our next Proposition we prove the (local) existence of solutions using a constructive method.

\begin{proposition}
\label{prop:solution-eikonal} Let $\mathbb P^n$ be a Riemannian
manifold and $\rho:I \to\mathbb R^+$ a differentiable
positive function. Fix an orientable hypersurface $L\subset\mathbb
P$ and a tubular neighborhood $L_\epsilon$ of $L$ such that the
distance function $d$ to $L$ is well-defined in $L_\epsilon$ and is
differentiable in $L_\epsilon\setminus L$. Also, define a real
valued and invertible function $h:I\to\mathbb R^+$ by
\begin{equation}\label{eq:h-inversa}
h^{-1}(s)=\int_{s_0}^s\frac{d\sigma}{C\rho(\sigma)},
\end{equation}
where $C\ne 0$. Then $f=h\circ d$ is transnormal in
$L_\epsilon\setminus L$.
\end{proposition}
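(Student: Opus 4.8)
The plan is to compute $|\nabla f|$ directly from $f = h \circ d$ using the chain rule and the defining property of the distance function, then verify it matches the generalized eikonal equation $|\nabla f| = C \cdot (\rho \circ f)$.

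First I would recall the basic fact about distance functions to a hypersurface: on $L_\epsilon \setminus L$, the distance function $d$ to $L$ satisfies $|\nabla d| \equiv 1$. (This is standard: the gradient of $d$ at a point is the unit tangent to the minimizing geodesic from $L$, and $d$ restricted to such a geodesic is an arclength parameter.) In particular, $d$ is itself eikonal with constant $1$. Next, applying the chain rule to $f = h \circ d$ gives $\nabla f = (h' \circ d) \nabla d$, hence
\[
|\nabla f| = |h' \circ d| \cdot |\nabla d| = |h' \circ d|.
\]
So the whole statement reduces to identifying $|h' \circ d|$ with $C \cdot (\rho \circ f) = C \cdot (\rho \circ h \circ d)$, i.e., to showing $|h'(s)| = C \cdot \rho(h(s))$ for all $s$ in the relevant range.

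The identity $|h'(s)| = C\,\rho(h(s))$ I would extract from the definition of $h$ via its inverse. From $h^{-1}(s) = \int_{s_0}^s \frac{d\sigma}{C\rho(\sigma)}$ and the fundamental theorem of calculus, $(h^{-1})'(s) = \frac{1}{C\rho(s)}$, which is nonzero (since $\rho > 0$ and $C \neq 0$), so $h^{-1}$ is strictly monotone and genuinely invertible, justifying the existence of $h$. Differentiating $h^{-1}(h(u)) = u$ gives $(h^{-1})'(h(u)) \cdot h'(u) = 1$, hence $h'(u) = \frac{1}{(h^{-1})'(h(u))} = C\,\rho(h(u))$. Since $\rho > 0$, this already has a definite sign once $C$ is fixed, so $|h'(u)| = |C|\,\rho(h(u))$; substituting $u = d$ gives $|\nabla f| = |C| \cdot (\rho \circ f)$ on $L_\epsilon \setminus L$, which is the generalized eikonal equation with constant $|C|$ (and if one wants exactly $C$, simply take $C > 0$, as in Proposition~\ref{prop:solution-eikonal}'s intended use where $C = \tan\theta$). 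This completes the argument.

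The only genuine subtlety — the "main obstacle," though it is more a matter of care than of difficulty — is the regularity and well-definedness bookkeeping: one must know that $d$ is smooth on $L_\epsilon \setminus L$ (which is exactly the hypothesis on the tubular neighborhood), that $|\nabla d| = 1$ there, and that the composition $h \circ d$ stays inside the domain $I$ on which $\rho$ and hence $h$ are defined (shrinking $\epsilon$ if necessary so that $d(L_\epsilon) \subset h^{-1}(I)$). Once these domain issues are handled, the computation is the short chain-rule argument above, and no estimate or limiting procedure is needed.
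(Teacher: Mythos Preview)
Your argument is correct and is essentially identical to the paper's own proof: both use $|\nabla d|=1$, the chain rule $\nabla f=(h'\circ d)\nabla d$, and the inverse-function formula $h'=1/((h^{-1})'\circ h)=C\,(\rho\circ h)$. You add a bit more care about absolute values and domain bookkeeping, but the route is the same.
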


\begin{proof} It is well-known that $\vert\nabla d\vert=1$ in $L_\epsilon\setminus L$; then,
\begin{eqnarray*}
\vert\nabla f\vert & = & \vert\nabla(h\circ d)\vert =(h'\circ
d)\vert\nabla d\vert = h'\circ d \\ & = &
\frac{1}{(h^{-1})'(h\circ d)}=C\cdot(\rho\circ h\circ
d)=C\cdot(\rho\circ f),
\end{eqnarray*}
which proves the claim. \end{proof}

Now we analyze the (local) uniqueness of solutions of the generalized eikonal equation. We will use the results proved by Di~Scala and the
third named author in \cite{MR2681534}, where they studied the local uniqueness of the solutions of an
eikonal equation.

\begin{proposition}
\label{coro:local-solutions}
Let $f:\mathbb P\to I$ satisfy $
\vert\nabla f\vert=C\cdot(\rho\circ f)$ for $C\ne 0$.
Then $f$ is given locally as in Proposition
$\ref{prop:solution-eikonal}$.
\end{proposition}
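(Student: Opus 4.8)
The plan is to reduce the uniqueness statement for the generalized eikonal equation $|\nabla f| = C\cdot(\rho\circ f)$ to the classical eikonal case $|\nabla g| = \text{const}$, which was handled by Di~Scala and the third named author in \cite{MR2681534}. The natural change of variable is precisely the function $h$ from Proposition~\ref{prop:solution-eikonal}: since $h$ is invertible on $I$ (its inverse being the strictly monotone integral \eqref{eq:h-inversa}, as $C\rho$ has constant sign), we may set $g = h^{-1}\circ f$. The chain rule then gives $\nabla g = (h^{-1})'(f)\,\nabla f$, and using $(h^{-1})'(s) = 1/(C\rho(s))$ together with the hypothesis $|\nabla f| = C\cdot(\rho\circ f)$ we compute $|\nabla g| = \dfrac{|\nabla f|}{|C|\,\rho(f)} = \dfrac{|C|\,\rho(f)}{|C|\,\rho(f)} = 1$. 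Hence $g$ is a (classical) eikonal function on $\mathbb P$.

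Next I would invoke the local characterization of eikonal functions from \cite{MR2681534}: near any point of its domain, such a $g$ coincides, up to an additive and a sign constant, with the signed distance function $d$ to a suitable embedded hypersurface $L\subset\mathbb P$ — more precisely, $g$ is locally of the form $\pm d + c$ for a constant $c$, where $d$ is the distance to a level hypersurface of $g$. Absorbing the constant $c$ into a reparametrization of the interval (equivalently into the choice of the basepoint $s_0$ in \eqref{eq:h-inversa}) and the sign into the choice of normal orientation of $L$, we get $g = d$ locally. Composing back, $f = h\circ g = h\circ d$, which is exactly the form asserted in Proposition~\ref{prop:solution-eikonal}. One should also note that on the locus where $\nabla f = 0$ — which, since $\rho > 0$ and $C\neq 0$, forces $\rho\circ f$ to be handled separately, but in fact $|\nabla f| = C(\rho\circ f) > 0$ everywhere, so $f$ has no critical points and the distance function $d$ is genuinely smooth on the relevant tubular neighborhood; this is what makes the reduction clean.

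The main obstacle I anticipate is not the computation but the careful bookkeeping of the reduction to \cite{MR2681534}: one must check that the hypotheses of that paper's uniqueness result are met — in particular that $g$ has no critical points (handled above), that the relevant statement there is genuinely local and produces an \emph{embedded} hypersurface $L$ (so that the distance function is well defined and smooth on a one-sided or two-sided tubular neighborhood), and that the additive/sign ambiguities can indeed be absorbed into the data $(L, s_0, C)$ without changing the class of functions described in Proposition~\ref{prop:solution-eikonal}. A secondary point worth a sentence is that the constant $C$ appearing in the eikonal equation for $f$ is exactly the constant used to define $h$, so the correspondence $f \leftrightarrow (L,h)$ is consistent; once these verifications are in place the argument is a one-line composition.
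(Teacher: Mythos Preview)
Your proposal is correct and follows essentially the same route as the paper: set $g=h^{-1}\circ f$ (the paper calls it $d$ from the outset), compute $|\nabla g|=1$ via the chain rule and $(h^{-1})'(s)=1/(C\rho(s))$, and then invoke Theorem~5.3 of \cite{MR2681534} to identify $g$ locally as the distance function to a hypersurface $L\subset\mathbb P$. The paper's version is terser---it does not spell out the absence of critical points or the absorption of sign/additive constants that you flag---but the argument is identical in substance.
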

\begin{proof}
Let $d=h^{-1} \circ f$, where $h^{-1}$ is defined in equation
(\ref{eq:h-inversa}). Let us calculate the gradient of $d$ in
$\mathbb{P}$ :
\[ \nabla d = \nabla (h^{-1} \circ f)= ((h^{-1})'\circ f) \nabla f = \frac{1}{C \cdot ( \rho \circ f ) } \nabla
f.\]

Therefore, $|\nabla d| =  1 $. Theorem 5.3 in \cite{MR2681534}
implies then that for every point $p \in \mathbb P$ there exists a
neighborhood $U$ of $p$ in $\mathbb P$ and a hypersurface $L
\subset \mathbb P$ such that $d|_U$ measures the distance from a point in $U$ to the hypersurface
$L$.
This proves that $f=h
\circ d$ has the form given in Proposition
\ref{prop:solution-eikonal}.
\end{proof}

We are ready to translate the above results to our constant angle hypersurfaces setting.

\begin{corollary}\label{cor:construccion}
Let $\overline{M}^{n+1}$ be the warped product
$I\times_\rho\mathbb P^n$. A connected hypersurface in $\bar M$ is a constant angle hypersurface with $\theta\in(0,\pi/2)$ if and only if it is the graph of a function $f:\mathbb P\to\mathbb R$ of the form $f=h\circ d$, where $d$ measures the distance to a fixed orientable hypersurface $L\subset\mathbb P$ and $h$ satisfies
\[
h^{-1}(s)=\int_{s_0}^s\frac{d\sigma}{C\rho(\sigma)}.
\]
with $C=\tan\theta$.
\end{corollary}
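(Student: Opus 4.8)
The plan is to assemble Corollary \ref{cor:construccion} from the pieces already established, treating it essentially as a dictionary between the language of constant angle hypersurfaces and the analytic statement about transnormal functions. First I would invoke Theorem \ref{graph-of-transnormal}: since $\theta\in(0,\pi/2)$ excludes both $\theta=0$ (where $M$ lies in a slice) and $\theta=\pi/2$ (the cylinder case of Proposition \ref{prop:pisobre2}), the remaining alternative in Theorem \ref{graph-of-transnormal} forces $M$ to be, locally, the graph of a transnormal function $f:\mathbb P\to I$ satisfying $|\nabla f|=C\cdot(\rho\circ f)$, and moreover the proof of that theorem identifies the constant as $C=\tan\theta$. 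In particular $C\neq 0$ because $\theta\neq 0$.

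With $f$ in hand and $C=\tan\theta\neq 0$, I would then apply Proposition \ref{coro:local-solutions}: it says precisely that any solution of the generalized eikonal equation with $C\neq 0$ is locally of the form $f=h\circ d$, where $h^{-1}$ is the function defined in \eqref{eq:h-inversa} and $d$ is the distance function to some orientable hypersurface $L\subset\mathbb P$. This gives the forward direction of the Corollary verbatim, with the only new ingredient being the bookkeeping that the constant appearing in \eqref{eq:h-inversa} is the same $C=\tan\theta$ produced by Theorem \ref{graph-of-transnormal}. For the converse, I would start from a function $f=h\circ d$ of the stated form: Proposition \ref{prop:solution-eikonal} shows $f$ is transnormal with the constant $C$, hence $|\nabla f|=C\cdot(\rho\circ f)$, and then the ``conversely'' part of Theorem \ref{graph-of-transnormal} (or directly the computation \eqref{dfn:angle-of-CAgraphs}) shows that the graph of $f$ is a constant angle hypersurface whose angle $\theta$ satisfies $\cos\theta=1/\sqrt{1+C^2}$, i.e. $C=\tan\theta$; since $C\neq 0$ and $C\rho>0$ we get $\theta\in(0,\pi/2)$, closing the loop.

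Since every substantive claim is already available, there is no genuine obstacle; the one point requiring a word of care is that the conclusions of Theorem \ref{graph-of-transnormal} and of Propositions \ref{prop:solution-eikonal}--\ref{coro:local-solutions} are all \emph{local} (graphs over open subsets of $\mathbb P$, distance functions defined only on a tubular neighborhood $L_\epsilon\setminus L$), so the hypersurface $L$ and the representation $f=h\circ d$ are only asserted to exist in a neighborhood of each point. The statement of the Corollary should therefore be read in this local sense, exactly as the underlying results are phrased, and I would make that explicit rather than attempt any global patching. With that understood, the proof is simply the concatenation of the three cited results, which I would write out in the two directions as above.

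\begin{proof}
Assume first that $M$ is a constant angle hypersurface with $\theta\in(0,\pi/2)$. In particular $\theta\neq\pi/2$, so $M$ is not (an open subset of) a cylinder; by Theorem \ref{graph-of-transnormal}, $M$ is locally the graph of a transnormal function $f:\mathbb P\to I$, and the proof of that theorem shows that $f$ satisfies the generalized eikonal equation \eqref{eq:eikonal} with constant $C=\tan\theta$. Since $\theta\in(0,\pi/2)$ we have $C\neq 0$, so Proposition \ref{coro:local-solutions} applies and yields, locally around each point, a representation $f=h\circ d$, where $d$ measures the distance to a fixed orientable hypersurface $L\subset\mathbb P$ and $h$ is the function with
\[
h^{-1}(s)=\int_{s_0}^s\frac{d\sigma}{C\rho(\sigma)},\qquad C=\tan\theta,
\]
as in equation \eqref{eq:h-inversa}. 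This is the asserted form.

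Conversely, suppose $M$ is locally the graph of a function $f=h\circ d$ of the stated form, with $d$ the distance to an orientable hypersurface $L\subset\mathbb P$ and $h$ defined via \eqref{eq:h-inversa} for some constant $C\neq 0$. By Proposition \ref{prop:solution-eikonal}, $f$ is transnormal and satisfies $|\nabla f|=C\cdot(\rho\circ f)$. By the converse part of Theorem \ref{graph-of-transnormal} (equivalently, the computation \eqref{dfn:angle-of-CAgraphs}), the graph of $f$ is a constant angle hypersurface whose angle $\theta$ satisfies
\[
\cos\theta=\frac{1}{\sqrt{1+C^2}},
\]
so that $C=\tan\theta$. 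Since $C\neq 0$ and $\rho>0$, this forces $\theta\in(0,\pi/2)$, which completes the proof.
\end{proof}
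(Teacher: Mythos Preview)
Your proposal is correct and follows exactly the approach the paper intends: the corollary is stated without proof immediately after Theorem~\ref{graph-of-transnormal} and Propositions~\ref{prop:solution-eikonal}--\ref{coro:local-solutions}, with the sentence ``We are ready to translate the above results to our constant angle hypersurfaces setting,'' so your concatenation of these three results (with the bookkeeping $C=\tan\theta$) is precisely what the authors have in mind. Your remark that the representation $f=h\circ d$ is only local is also apt and matches the scope of the underlying propositions.
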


\section{Applications and Examples}\label{sec:examples}

In this section we will construct some examples of constant angle hypersurfaces and will show the relation of 
our construction with those made in the papers already mentioned in the Introduction.

\begin{example}
Let us consider the upper-half space model for the hyperbolic
space $\mathbb{H}^{n+1}$, which can be expressed as the warped
product $(0, \infty) \times_\rho \mathbb R^n$, where
$\rho(t)=1/t$. Then, taking $s_0=1$, \[r=h^{-1}(s)
=\int_1^s\frac{d\sigma}{C\rho(\sigma)}= \frac 1 C\int_1^s \sigma
\,d\sigma = \frac{s^2-1}{2C}.\]

Hence, $s=h(r)=\sqrt{2Cr+1}$. The hypersurface we consider is $L=
\mathbb R^{n-1}$, identified as usual with the points
$(x_1,\dots,x_{n-1},0)$ so that the (oriented) distance function
to $L$ is $x_n$, the $n$-th coordinate function on $\mathbb R^n$.

Therefore, the explicit expression of the function $f=h\circ d$ is
\[
f(x_1,\dots,x_n)=h\circ d(x_1,\dots,x_n)=h(x_n)=\sqrt{2Cx_n+1}.
\]

We calculate the gradient of $f$ as
\[
\nabla f(x_1,\dots,x_n)=\frac{C}{\sqrt{{2Cx_n+1}}}\ \partial_n,
\]
where $\partial_n=\partial_{x_n}$. Note that
\[
\vert \nabla f(x_1,\dots,x_n) \vert^2 =
\frac{C^2}{2Cx_n+1}=C^2(\rho\circ f)^2(x_1,\dots,x_n).
\]
\end{example}

\begin{application} In \cite{MR2681099}, Munteanu studied the surfaces
in three-dimensional Euclidean space whose normal vector at a
point makes a constant angle with the position vector of that
point, showing (Theorem 1 in \cite{MR2681099}) that a
\label{teo:munteanu} constant angle
surface is an open part of the Euclidean $2$-sphere or it can be
parameterized by
\begin{equation}\label{eq:munteanu}
r(u,v)=u\big\{\sin\theta[\cos(\cot\theta\ln
u)\,\alpha(v)+\sin(\cot\theta\ln
u)\cdot\alpha(v)\times\alpha'(v)]\big\},
\end{equation}
where $\theta\neq 0$ and $\alpha$ is a unit speed curve
$\alpha:I\to\mathbb S^2$.

To translate Munteanu's analysis to our context, note that the Euclidean $3$-space minus the
origin is isometric to the warped product
\[
(0,\infty)\times_\rho\mathbb
S^2(\sin\theta),\quad\rho(t)=\frac{t}{\sin\theta};
\]
here $\mathbb S^2(\sin\theta)$ denotes a $2$-dimensional sphere
with radius $\sin\theta$. Of course, the natural isometry of this
warped product with $\mathbb R^3\setminus\{0\}$ is given
explicitly by $(t,p)\mapsto tp$.

To be able to compare Munteanu's result with our Corollary
\ref{cor:construccion}, we note that the function $h$ given by
equation (\ref{eq:h-inversa}) is given by
\[
h^{-1}(s)=\int_1^s\frac{d\sigma}{C\rho(\sigma)}=\frac {\sin\theta}
C \ln s.
\]

Also, we will obtain an expression for the distance function in
$\mathbb S^2$ to the curve $\alpha$ that appears in (\ref{eq:munteanu}). Note that the expression in braces in
(\ref{eq:munteanu}) gives a point $\varphi(u,v)$ in $\mathbb
S^2(\sin\theta)$ and that its distance $d=d(\varphi(u,v))$ to
$\alpha(v)$ is precisely the product of the radius and the angle
between the two vectors; i.e.,
\[
d(\varphi(u,v))=\sin\theta\cdot\cot\theta\cdot\ln
u=\cos\theta\cdot\ln u;
\]
recalling that $C$ may be seen as $\tan\theta$, we have
\[
d(\varphi(u,v))=h^{-1}(u),
\]
which gives
\[
f(\varphi(u,v))=h\circ d(\varphi(u,v))=u.
\]

This fact means that a constant angle surface in
$(0,\infty)\times_\rho\mathbb S^2(\sin\theta)$ is given by the
graph $(f(\varphi(u,v)),\varphi(u,v))$ of $f$, i.~e., by
\[ (u,\varphi(u,v)) = (u,\sin\theta[\cos(\cot\theta\ln
u)\,\alpha(v)+\sin(\cot\theta\ln
u)\cdot\alpha(v)\times\alpha'(v)]);
\]
but this expression corresponds precisely to equation (\ref{eq:munteanu})
via the aforementioned isometry of $(0,\infty)\times_\rho\mathbb
S^2(\sin\theta)$ with the Euclidean space. Thus, we recover
Munteanu's result.
\end{application}

\begin{application}
In our last comparison we consider the work \cite{DMVV}, where Dillen et al. analyzed the hypersurfaces in the warped product $I\times_\rho\mathbb R^2$ making a constant angle
with the vector field $\partial_t$. Theorem 1 in \cite{DMVV} states that an isometric immersion
$r : M^2 \to \overline{M}=I \times_\rho\mathbb R^2$ defines a
surface with constant angle $\theta\in [0, \pi/2]$ if and only if,
up to rigid motions of $\overline{M}$, one of the following holds
locally:
\begin{enumerate}
\item\label{item1} There exist parameters
$(u, v)$ of $M$, with respect to which the
immersion $r$ is given by \begin{multline}\label{eq:dillen} r(u, v) =
\left( u \sin\theta, \cot\theta\left(\int^{u \sin\theta}
\frac{d\sigma}{\rho(\sigma)}\right)\cos v-\int^v
g(\sigma)\sin\sigma\,d\sigma,\right.\\
\left.\cot\theta\left(\int^{u \sin\theta}
\frac{d\sigma}{\rho(\sigma)}\right)\sin v+\int^v
g(\sigma)\cos\sigma\,d\sigma\right)\end{multline} for some smooth
function $g$.

\item\label{item2} $r(M)$ is an open part of the cylinder $x-G(t)
= 0$ for the real function $G$ given by \[G(t) =
\cot\theta\int^t\frac{d\sigma}{\rho(\sigma)}.\] $($Here $(x,y)$ are
the standard coordinates in $\mathbb R^2$.$)$

\item\label{item3} $r(M)$ is an open part of the surface $t = t_0$
for some real number $t_0$, and $\theta = 0$.

\end{enumerate}

We will discuss items (\ref{item1}) and (\ref{item2}) of this
theorem. In relation with item (\ref{item2}) and in analogy with
our previous discussion of Munteanu's work, we see that the
function $G$ may be written in our terminology as
\[G(t) =
\cot\theta\int^t\frac{d\sigma}{\rho(\sigma)}=
\int^t\frac{d\sigma}{C\rho(\sigma)}=h^{-1}(t).\]

To obtain the cylinder $x-G(t)=0$, we proceed as follows: We build
a constant angle curve in the $(t,x)$-plane, that is, a curve
making a constant angle with the vertical vector field
$\partial_t$. Note that this plane is a warped product
$I\times_\rho\mathbb R$.

By Corollary \ref{cor:construccion}, we may build this curve by first
taking a codimension one manifold in $\mathbb R$, i.e., fixing a
point in the real axis, which we may take as the origin. Next, we
calculate the distance function $d$ in $\mathbb R$ to this point,
which obviously gives $d(x)=x$. Hence, the graph of $f=h\circ
d=h=G^{-1}$ is the constant angle curve we were looking for. By
taking the cylinder over this curve in the $3$-dimensional space,
we obtain the constant angle surface given in item (\ref{item2}).

To analyze item (\ref{item1}), we define the following curve
$\alpha(v)$ in the $(x,y)$-plane:
\[
\alpha(v)= \left( -\int^v g(\sigma)\sin\sigma\,d\sigma, \int^v
g(\sigma)\cos\sigma\,d\sigma\right);
\]
which may be obtained from the second and third coordinates in (\ref{eq:dillen})
making $u=0$.\\ 
Note that $\alpha'(v)=g(v)(-\sin v,\cos v)$, so that
$(\cos v,\sin v)$ is a unit vector field everywhere normal to this
curve. An easy calculation shows that the second and third coordinates in (\ref{eq:dillen}) give a parametrization $\varphi(u,v)$ of a neighborhood of $\alpha$ by Fermi coordinates; in fact, the distance of a point in this neighborhood to the curve $\alpha$ is precisely
\[
d(\varphi(u,v))=\cot\theta\left(\int^{u \sin\theta}
\frac{d\sigma}{\rho(\sigma)}\right),\]
which is equal to $h^{-1}(u\sin\theta)$ in our terminology. From this we have that the eikonal function $f$ given in Corollary \ref{cor:construccion} is
\[
f(\varphi(u,v))=h\circ d(\varphi(u,v))=u\sin\theta;
\]
that is, equation (\ref{eq:dillen}) is the expression of the graph of $f$ in $I\times_\rho\mathbb R^2$.
\end{application}

\begin{remark}
Note that instead of $u\sin\theta$ we may use a function $\psi(u)$
in the upper limit of the integrals appearing in (\ref{eq:dillen})
to obtain a point $\varphi(u,v)$ in the plane whose distance to
the curve $\alpha$ is
\[
d(u,v)=\cot\theta\left(\int^{\psi(u)}
\frac{d\sigma}{\rho(\sigma)}\right),\]
so that $f(\varphi(u,v))=\psi(u)$.
\end{remark}

\section{Minimal constant angle hypersurfaces}\label{sec:minimas}

Let us recall that a function in Euclidean space is called eikonal if its
gradient has constant length.

\begin{lemma}
\label{lemma:armonic&eikonal}
Let $f : U \subset \mathbb{R}^n \longrightarrow \mathbb{R}$ be a smooth function
defined on the connected open subset $U$. If $f$ is a non constant harmonic and eikonal function
then $f$ is linear.
\end{lemma}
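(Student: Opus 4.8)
The plan is to differentiate the eikonal relation twice and then contract, using harmonicity to annihilate the resulting third-order term, which forces the full Hessian of $f$ to vanish. Write $|\nabla f|^2=\sum_{i=1}^n f_i^2=C^2$, where $f_i=\partial f/\partial x_i$; since $f$ is non constant, $C\neq 0$. First I would differentiate this identity with respect to $x_j$ to get
\[
\sum_{i=1}^n f_i f_{ij}=0\qquad(j=1,\dots,n),
\]
that is, $\mathrm{Hess}(f)\,\nabla f\equiv 0$ on $U$. I would then differentiate this once more with respect to $x_j$ and sum over $j$, obtaining
\[
\sum_{i,j=1}^n f_{ij}^2+\sum_{i=1}^n f_i\Big(\sum_{j=1}^n f_{ijj}\Big)=0.
\]
Since $\sum_j f_{ijj}=\partial_i(\Delta f)=0$ by harmonicity, the second sum vanishes and we are left with $|\mathrm{Hess}(f)|^2=\sum_{i,j}f_{ij}^2=0$. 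Hence $\mathrm{Hess}(f)\equiv 0$ on $U$; because $U$ is connected, $\nabla f$ is a constant vector (of length $C$) and $f$ is affine, i.e.\ linear in the sense of the statement.

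Equivalently, this is nothing but the Bochner–Weitzenböck formula $\tfrac12\Delta|\nabla f|^2=|\mathrm{Hess}(f)|^2+\langle\nabla f,\nabla\Delta f\rangle$ (the Euclidean case, so the Ricci term is absent) applied to a harmonic $f$ with $|\nabla f|^2$ constant; I would probably present the bare-hands computation above to keep the lemma self-contained, and mention the Bochner interpretation as a remark.

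There is no genuinely hard step here — the whole argument is two differentiations and one contraction. The only point that needs care is the contraction in the second step: one must sum the twice-differentiated identity along the diagonal so that the third-derivative term reassembles as $\nabla f\cdot\nabla(\Delta f)$, which is precisely where harmonicity is used. It is worth stressing that eikonality alone gives only $\mathrm{Hess}(f)\,\nabla f=0$, which does not force $f$ to be affine (for instance $f(x,y)=\sqrt{x^2+y^2}$ on $\mathbb{R}^2\setminus\{0\}$ is eikonal but not harmonic), so both hypotheses are essential; and connectedness of $U$ is needed only to upgrade "$\nabla f$ locally constant" to "$f$ affine on all of $U$".
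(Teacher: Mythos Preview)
Your argument is correct: differentiating the eikonal identity twice and contracting gives exactly the Bochner--Weitzenb\"ock identity $\tfrac12\Delta|\nabla f|^2=|\mathrm{Hess}(f)|^2+\langle\nabla f,\nabla\Delta f\rangle$ in flat space, and with both terms on the left and right forced to zero you get $\mathrm{Hess}(f)\equiv 0$, hence $f$ affine on the connected set $U$. The computation is clean and the use of connectedness at the end is handled properly.

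The paper, however, proceeds by an entirely different and more geometric route. It observes that the level sets $f^{-1}(t)$ form a family of equidistant hypersurfaces (eikonality), and that the mean curvature of a level set is $H=-\tfrac{1}{|\nabla f|}\Delta f+\tfrac{1}{|\nabla f|^2}\nabla|\nabla f|$, so harmonicity plus eikonality makes every level set minimal. Then it invokes the tube formula relating the shape operators of nearby parallel hypersurfaces, $A_t=(I-tA)^{-1}A$, so that the principal curvatures of $f^{-1}(t)$ are $\lambda_i/(1-(t-t_0)\lambda_i)$; imposing that the sum of these vanish for all $t$ near $t_0$ and differentiating at $t=t_0$ yields $\sum_i\lambda_i^2=0$, forcing each level set to be totally geodesic and hence $f$ linear. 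Your approach is shorter, purely analytic, and requires no external formulas; the paper's approach is tailored to the surrounding context (the corollary on minimal constant angle hypersurfaces reuses the level-set/mean-curvature viewpoint), and makes the geometric content --- that parallel minimal hypersurfaces in $\mathbb{R}^n$ must be hyperplanes --- explicit rather than hidden inside a Bochner contraction.
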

\begin{proof}
The idea is to prove that $f$ is locally linear and then to use that $U$ is connected.
So in our argument we can take smaller open neighbourhoods if it were necessary.
Without loss of generality we can assume that $|\nabla f|^2=1$.
Then the level hypersurfaces of $f$ are equidistant embedded hypersurfaces in $\mathbb{R}^n$ because
the distance between two level hypersurfaces is measured along the integral curves of the vector field 
$\nabla f$, which has constant length.
Since $f$ is harmonic and eikonal every level hypersurface $f^{-1}(t)$ of $f$ is minimal in $\mathbb{R}^n$
because the mean curvature vector field $H$ of the level hypersurfaces is given by
\begin{equation}
\label{eqn:meancurv-levelhyper}
H= -\frac{1}{| \nabla f|}\triangle f + \frac{1}{|\nabla f|^2} \nabla |\nabla f| ,
\end{equation}
see \cite{MR934020} for details.
As we said before, in our case we can conclude that $H \equiv 0$, i.e. every level hypersurface is minimal.
So, $\{ f^{-1}(t) \}_{t \in f(U)}$ is a family of equidistant minimal hypersurfaces of $\mathbb{R}^n$.  
We will prove that this is possible if and only if every level hypersurface in the family
is a hyperplane.\\
Let $\lambda_1, \lambda_2, \ldots \lambda_{n-1}$ be the principal curvatures of $f^{-1}(t_0)$.
It is known that for every $t \in F(U)$ close to $t_0$, the principal curvatures of $f^{-1}(t)$ are given by
$$ \frac{\lambda_1}{1-(t-t_0)\lambda_1}, \frac{\lambda_2}{1-(t-t_0)\lambda_2},  \ldots , 
\frac{\lambda_{n-1}}{1-(t-t_0)\lambda_{n-1}} . $$
This is a consequence of the relation between the shape operator $A$ of $f^{-1}(t_0)$
and the shape operator $A_t$ of $f^{-1}(t)$: $A_t = (I-tA)^{-1} A$. See \cite{MR1075013} page 38.\\
Since every level hypersurface $f^{-1}(t)$ of $f$ is minimal, the mean curvature of $f^{-1}(t)$ is zero: 
$$ \frac{\lambda_1}{1-(t-t_0)\lambda_1} + \frac{\lambda_2}{1-(t-t_0)\lambda_2} +  \ldots + 
\frac{\lambda_{n-1}}{1-(t-t_0)\lambda_{n-1}} =0. $$

Taking the derivative with respect to $t$ and evaluating in $t=t_0$ we obtain that 
$ \lambda_1^2 + \lambda_2^2 +  \ldots + \lambda_{n-1}^2 =0$, which implies that 
$ \lambda_1 = \lambda_2 =  \ldots = \lambda_{n-1} =0$. Therefore $f^{-1}(t)$ is totally geodesic, i.e.
it is part of a hyperplane. This proves that $f$ is linear.
\end{proof}

The next Corollary \ref{coro:minima-ca}, improves Theorem 2.8 in \cite{MR2520724} which says that
a constant angle hypersurface $M$ in Euclidean space is minimal if and only if every slice of $M$
is also minimal. Our Corollary here gives a complete, explicit classification of these hypersurfaces.

\begin{corollary}
\label{coro:minima-ca}
Let $M$ be a connected constant angle hypersurface in $\mathbb{R}^n$ with respect to a constant direction $X$.
If $M$ is minimal then  either $M$ is part of a cylinder, over a minimal hypersurface in $\mathbb{R}^{n-1}$ 
or $M$ is part of a hyperplane.
\end{corollary}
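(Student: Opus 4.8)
The plan is to reduce the statement to the classification already obtained, together with Lemma~\ref{lemma:armonic&eikonal}. Regard $\mathbb{R}^n$ as the (trivially) warped product $\mathbb{R}\times_{1}\mathbb{R}^{n-1}$, with constant warping function $\rho\equiv 1$ and with the first factor spanned by the given constant direction $X=\partial_t$. A connected hypersurface $M$ making a constant angle with $X$ is then exactly a constant angle hypersurface in this warped product, so Theorem~\ref{graph-of-transnormal} applies: either $(i)$ $M$ is an open subset of a cylinder $\mathbb{R}\times L^{n-2}$ over a hypersurface $L\subset\mathbb{R}^{n-1}$, or $(ii)$ $M$ is an open subset of the graph of an eikonal function $f:\Omega\subset\mathbb{R}^{n-1}\to\mathbb{R}$, that is $|\nabla f|=C$ with $C\ge 0$ constant (this is $(\ref{eq:eikonal})$ with $\rho\equiv 1$). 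I will treat the two cases separately, showing that minimality forces $L$ to be minimal in case $(i)$ and $M$ to lie in a hyperplane in case $(ii)$.

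In case $(i)$, the field $X$ is parallel in $\mathbb{R}^n$ and tangent to $M$, hence $\overline\nabla_Y X=0$ for every $Y\in\mathfrak{X}(M)$; taking tangential and normal parts gives $\nabla_Y X=0$ and $II(X,Y)=0$ (this is the $\theta=\pi/2$, $\rho'=0$ instance of Theorem~\ref{teo:extrinsic-geometryofCAH}). Thus $X$ lies in the kernel of the shape operator, and with respect to an orthonormal frame consisting of $X$ together with a frame of $TL$ the shape operator of $M$ splits as $0\oplus A^{L}$, where $A^{L}$ is the shape operator of $L$ in the slice $\mathbb{R}^{n-1}$ (the unit normals agree). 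Consequently $\operatorname{tr}A_\xi=\operatorname{tr}A^{L}$, so $M$ is minimal if and only if $L$ is a minimal hypersurface of $\mathbb{R}^{n-1}$; this is the first alternative.

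In case $(ii)$, the mean curvature of the graph of $f$ over Euclidean $\mathbb{R}^{n-1}$ is a nonzero constant multiple of $\operatorname{div}\bigl(\nabla f/\sqrt{1+|\nabla f|^2}\,\bigr)$. Since $|\nabla f|=C$ is constant, the factor $\sqrt{1+|\nabla f|^2}$ is constant and may be pulled out, so minimality of $M$ is equivalent to $\Delta f=0$: the eikonal function $f$ is then also harmonic. If $C=0$ (equivalently $\theta=0$) then $f$ is constant and $M$ lies in a slice $\{t_0\}\times\mathbb{R}^{n-1}$, an affine hyperplane; otherwise $f$ is a non-constant harmonic eikonal function on the connected open set $\Omega$, so Lemma~\ref{lemma:armonic&eikonal} gives that $f$ is linear, whence its graph is an affine hyperplane. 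In either subcase $M$ is part of a hyperplane, which is the second alternative, and the proof is complete.

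The only genuinely new ingredient is Lemma~\ref{lemma:armonic&eikonal}; the rest is the familiar computation of the mean curvature of a graph and of a metric cylinder, so I expect the real difficulty to have been isolated already in that lemma. When writing the details, the two points to be careful about are the splitting $0\oplus A^{L}$ of the shape operator in case $(i)$ (and the resulting identity of mean curvatures), and making the boundary cases $\theta\in\{0,\pi/2\}$ explicit so that the dichotomy of Theorem~\ref{graph-of-transnormal} is applied correctly in every regime.
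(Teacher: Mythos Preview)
Your proof is correct and follows the same overall architecture as the paper's: split into the tangent case (cylinder) and the transversal case (graph of an eikonal function), and in the latter reduce to Lemma~\ref{lemma:armonic&eikonal}. The one substantive difference is how you pass from ``$M$ is a minimal graph'' to ``$f$ is harmonic''. The paper first invokes an external result (Theorem~2.8 of \cite{MR2520724}) to conclude that the slices of $M$---equivalently, the level sets of $f$---are minimal, and then uses the level-set mean curvature formula~(\ref{eqn:meancurv-levelhyper}) to deduce $\Delta f=0$. You instead use the Euclidean minimal-graph equation $\operatorname{div}\!\bigl(\nabla f/\sqrt{1+|\nabla f|^2}\bigr)=0$ directly and observe that the denominator is constant when $|\nabla f|=C$, giving $\Delta f=0$ in one line. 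This is more elementary and self-contained, at the cost of not highlighting the geometric fact about minimality of the slices; the paper's route, conversely, makes that intermediate geometry explicit but depends on an outside reference. In the cylinder case your argument is also a bit more detailed than the paper's, which simply asserts that $L$ must be minimal; your splitting $A_\xi=0\oplus A^L$ is the standard justification.
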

\begin{proof}
We can assume that $X$ is a unit vector field.
If $X$ is tangent to $M$, then it is clear that $M$ is part of a cylinder over a hypersurface $L$ in a $\mathbb{R}^{n-1}$
orthogonal to $X$. Moreover, $L$ should be minimal because $M$ is minimal.\\
If $X$ is transversal to $M$ then $M$ if the graph of a smooth function $f$, the height function in direction $X$.
Since $M$ is minimal, every slice of $M$ with hyperplanes orthogonal to $X$ is minimal in the Euclidean ambient,
which follows from Theorem 2.8 of \cite{MR2520724}.
Equivalently, every level hypersurface of $f$ is minimal. Under the hypothesis that $f$ is eikonal and using
relation (\ref{eqn:meancurv-levelhyper}), the latter condition holds if and only if $f$ is a harmonic function.
So, $f$ is an eikonal and harmonic function. By Lemma \ref{lemma:armonic&eikonal}, $f$ is linear.
Therefore, $M$ is part of a hyperplane.
\end{proof}

\section*{Acknowledgements}

The second named author thanks the hospitality of Universidad Au\-t\'o\-no\-ma de Yucat\'an during the preparation of this paper. 
The third named author wants to thank Antonio J. Di~Scala for many useful conversations at the
Politecnico di Torino about transnormal and eikonal functions, as
well as for his suggestion of some references in such topics.

\bibliographystyle{plain}
\bibliography{references}

\noindent {\bf Authors' Addresses:}

\noindent Eugenio Garnica, Departamento de Matem\'aticas, \\
Facultad de Ciencias, UNAM, 04510 DF, M\'exico\\
 E-mail: garnica@servidor.unam.mx\\

\noindent Oscar Palmas, Departamento de Matem\'aticas,\\ 
Facultad de Ciencias, UNAM, 04510 DF, M\'exico\\
E-mail address: oscar.palmas@ciencias.unam.mx\\

\noindent Gabriel Ruiz-Hern\'andez, Instituto de Matem\'aticas,\\
UNAM, 04510 DF, M\'exico\\
E-mail: gruiz@matem.unam.mx

\end{document}